\documentclass[reqno,12pt]{amsart}


\usepackage{amsmath,amssymb,mathtools,verbatim,slashed,upgreek,bbm,appendix}
\usepackage[protrusion=true,babel=true]{microtype}
\usepackage[english]{babel}
\usepackage[widespace]{fourier}
\usepackage[backrefs]{amsrefs}
\usepackage[margin=1in]{geometry}
\usepackage[onehalfspacing]{setspace}
\usepackage[pdfusetitle,pagebackref]{hyperref}
\numberwithin{equation}{section}                
\usepackage[nameinlink,noabbrev]{cleveref}
\expandafter\def\csname ver@etex.sty\endcsname{3000/12/31}

\usepackage{autonum}                         


\newtheorem{theorem}{Theorem}[section]
\newtheorem{Mtheorem}{Main Theorem}
\newtheorem*{acknowledgment}{Acknowledgment}

\newtheorem{lemma}[theorem]{Lemma}

\newtheorem{remark}[theorem]{Remark}

\crefname{theorem}{Theorem}{Theorems}						
\creflabelformat{theorem}{#2{#1}#3}							
\crefname{Mtheorem}{Main Theorem}{Main Theorems}			
\creflabelformat{Mtheorem}{#2{#1}#3}						
\crefname{lemma}{Lemma}{Lemmata}							
\creflabelformat{lemma}{#2{#1}#3}							
\crefname{corollary}{Corollary}{Corollaries}				
\creflabelformat{corollary}{#2{#1}#3}						
\crefname{proposition}{Proposition}{Propositions}			
\creflabelformat{proposition}{#2{#1}#3}						
\crefname{ineq}{inequality}{inequalities}					
\creflabelformat{ineq}{#2{\upshape(#1)}#3}					
\crefname{cond}{condition}{conditions}						
\creflabelformat{cond}{#2{\upshape(#1)}#3}					
\crefname{hypoth}{Hypothesis}{Hypotheses}					
\creflabelformat{hypoth}{#2{#1}#3}							
\crefname{definition}{Definition}{Definitions}				
\creflabelformat{def}{#2{#1}#3}								
\crefname{appsec}{Appendix}{Appendices}


\allowdisplaybreaks[1]


\let\originalleft\left
\let\originalright\right
\renewcommand{\left}{\mathopen{}\mathclose\bgroup\originalleft}
\renewcommand{\right}{\aftergroup\egroup\originalright}

\def\({\mathopen{}\left(}
\def\){\right)\mathclose{}}


\makeatletter
\renewcommand*{\eqref}[1]{\hyperref[{#1}]{\textup{\tagform@{\ref*{#1}}}}}
\makeatother



\newcommand*{\eqdef}{\mathrel{\vcenter{\baselineskip0.5ex \lineskiplimit0pt\hbox{.}\hbox{.}}}=}


\def\id{\mathbbm{1}}
\def\cx{\mathbbm{C}}

\def\rl{\mathbbm{R}}
\def\N{\mathbbm{N}}
\def\P{\mathbbm{P}}


\def\cA{\mathcal{A}}

\def\cC{\mathcal{C}}

\def\cE{\mathcal{E}}
\def\cF{\mathcal{F}}

\def\cH{\mathcal{H}}

\def\cL{\mathcal{L}}

\def\cX{\mathcal{X}}


\def\Im{\mathrm{Im}}

\def\Ran{\mathrm{Ran}}
\def\Re{\mathrm{Re}}

\def\Spec{\mathrm{Spec}}

\def\vol{\mathrm{vol}}
\def\Vol{\mathrm{Vol}}


\def\Conn{\mathrm{Conn}}
\def\dR{\mathrm{dR}}
\def\del{\partial}
\def\delbar{\overline{\partial}}
\def\rd{\mathrm{d}}
\def\rG{\mathrm{G}}



\title[On the bifurcation theory of the Ginzburg--Landau equations]{On the bifurcation theory of the Ginzburg--Landau equations}
\date{\today}
\keywords{Ginzburg--Landau equations, nonminimal solutions, bifurcation theory}
\subjclass[2020]{35Q56, 53C07, 58E15, 58J55}

\author{\'Akos Nagy}
\address[\'Akos Nagy]{University of California, Santa Barbara}
\urladdr{\href{https://akosnagy.com}{akosnagy.com}}
\email{\href{mailto:contact@akosnagy.com}{contact@akosnagy.com}}

\author{Gon\c{c}alo Oliveira}
\address[Gon\c{c}alo Oliveira]{IST Austria}
\urladdr{\href{https://sites.google.com/view/goncalo-oliveira-math-webpage/home}{sites.google.com/view/goncalo-oliveira-math-webpage/home}}
\email{\href{mailto:galato97@gmail.com}{galato97@gmail.com}}

\calclayout
\pagestyle{plain}
\clubpenalty = 10000
\widowpenalty = 10000
\setlength{\footskip}{20pt}

\hypersetup{
	pdffitwindow	= true,
	unicode			= true,
	pdffitwindow	= true,
	pdftoolbar		= false,
	pdfmenubar		= false,
	pdfstartview	= {FitH},
	pdfauthor		= {\'Akos Nagy, Gon\c{c}alo Oliveira},
	colorlinks		= true,
	linkcolor		= black,
	citecolor		= black,
	filecolor		= black,
	urlcolor		= blue
}

\begin{document}

\begin{abstract}
	We construct nonminimal and irreducible solutions to the Ginzburg--Landau equations on closed manifolds of arbitrary dimension with trivial first real cohomology. Our method uses bifurcation theory where the ``bifurcation points'' are characterized by the eigenvalues of a Laplace-type operator. To our knowledge these are the first such examples on nontrivial line bundles.
\end{abstract}

\maketitle

\section{Introduction}

We consider Ginzburg--Landau theory, also known as abelian Yang--Mills--Higgs theory, on Riemannian manifolds. More concretely, if $\( X, g \)$ is an $N$-dimensional, closed, oriented, Riemannian manifold, $\( \cL, h \)$ is a Hermitian line bundle over $X$, and $\tau, \kappa$ are two positive coupling constants, then the \emph{Ginzburg--Landau (free) energy} of a unitary connection $\nabla$ and smooth section $\upphi$ (both on $\cL$) is given by
\begin{equation}
	\( \nabla, \upphi \) \mapsto \frac{1}{2} \int\limits_X \( \left| F_\nabla \right|^2 + \left| \nabla \upphi \right|^2 + \tfrac{\kappa^2}{2} \( \tau - |\upphi|^2 \)^2 \) \ \vol_g. \label{eq:gle}
\end{equation}
By \cite{PS20}*{Proposition~A.1} every critical point of \eqref{eq:gle} is gauge equivalent to a smooth one, which in turn is a solution to
\begin{subequations}
	\begin{align}
		\rd^* F_\nabla - i \: \Im \( h \( \nabla \upphi, \upphi \) \)		&= 0, \label{eq:gl1} \\
		\nabla^* \nabla \upphi - \kappa^2 \( \tau - |\upphi|^2 \) \upphi	&= 0. \label{eq:gl2}
	\end{align}
\end{subequations}
We call \cref{eq:gl1,eq:gl2} the \emph{Ginzburg--Landau equations}. These are nonlinear, second order, elliptic partial differential equations which are invariant by the action of the group of automorphisms of $\( \cL, h \)$, also known as the gauge group. If a unitary connection $\nabla^0$ satisfies the abelian Yang--Mills equation (also known as the source-free Maxwell's equation)
\begin{equation}
	\rd^* F_{\nabla^0} = 0, \label{eq:normal}
\end{equation}
then the pair $\( \nabla^0, 0 \)$ solves the Ginzburg--Landau equations; such a pair is said to be a {\em normal phase solution}. Notice that \cref{eq:normal} is independent of $\tau$ and $\kappa$. As is common in abelian gauge theories, we call a pair $\( \nabla, \upphi \)$ {\em reducible} if $\upphi$ vanishes identically, and {\em irreducible} otherwise. A solution to the Ginzburg--Landau equations is reducible if and only if it is a normal phase solution.

\smallskip

On closed manifolds absolute minimizers of the Ginzburg--Landau energy \eqref{eq:gle} exists (cf; \cites{N17b,PS20}) which are automatically solutions Ginzburg--Landau \cref{eq:gl1,eq:gl2}. The minimizers, often called vortices, are well-understood, especially on K\"ahler manifolds and for critical coupling, that is, when $\kappa = \tfrac{1}{\sqrt{2}}$; cf. \cites{JT80,B90,GP93} and more recently \cites{CERS17,N17b}. The critically coupled case has special properties that the others lack, for example ``self-duality'' via a Bogomolny-type trick.

\smallskip

Much less is known about nonminimal solutions. In \cite{PS20}, Pigati and Stern constructed irreducible solutions on (topologically) trivial line bundles over closed Riemannian manifolds. As these solutions have positive energy, they cannot be the absolute minima of Ginzburg--Landau energy \eqref{eq:gle}. Furthermore, in our companion paper \cite{nagy_nonminimal_2022}, we constructed nonminimal critical points on nontrivial line bundles over oriented and closed surfaces.

\smallskip

In this paper, motivated by and building on works of \cites{Parker1992a,Parker1992b,Parker1993,N17b,CERS17}, we construct new, nonminimal, and irreducible solutions to the Ginzburg--Landau \cref{eq:gl1,eq:gl2}. To the best of our knowledge, on nontrivial bundles and in dimension greater than two, these solutions are the only known nonminimal and irreducible solutions so far, and together with the solutions of \cites{PS20,nagy_nonminimal_2022}, these solutions are the only known nonminimal and irreducible solutions on any line bundle.

\medskip

\subsection*{Summary of main results}

We construct solutions on closed manifolds satisfying certain topological/geometric conditions. The proof uses a technique inspired by Lyapunov--Schmidt reduction; cf. \cite{GW13}*{Chapter~5}.

\begin{Mtheorem}
	\label{Mtheorem:branch}
	Let $X$ be a closed, oriented, $N$-dimensional, Riemannian manifold with a Hermitian line bundle $\cL$, and $\( \nabla^0, 0 \)$ be a normal phase solution on $\cL$. Let $\Delta_0 \eqdef \( \nabla^0 \)^* \nabla^0$ acting on square integrable sections of $\cL$ and $\lambda \in \Spec \( \Delta_0 \)$. Assume $X$ has trivial first de Rham cohomology.

	Then there exists $t_0 > 0$ and for each $t \in \( 0, t_0 \)$ an element $\Phi_t \in \ker \( \Delta_0 - \lambda \id \)$ with unit $L^2$-norm such that there is a (possibly discontinuous) branch of triples	
	\begin{equation}
		\left\{ \ \( A_t, \upphi_t, \tau_t \) \in \Omega^1 \times \( L_1^2 \cap L^N \) \times \rl_+ \ | \ t \in \( 0, t_0 \) \ \right\}, \label{eq:branch}
	\end{equation}
	of the form
	\begin{equation}
		\( A_t, \upphi_t, \tau_t \) = \( t^2 \cA_t, t \Phi_t + t^3 \Psi_t, \tfrac{\lambda}{\kappa^2} + t^2 \epsilon_t \),
	\end{equation}
	such that the family
	\begin{equation}
		\left\{ \ \( \cA_t, \Psi_t, \epsilon_t \) \ | \ t \in \( 0, t_0 \) \ \right\},
	\end{equation}
	is determined by $\Phi_t$ and is bounded in $L_1^2 \times \( L_1^2 \cap L^N \cap \( \ker \( \Delta_0 - \lambda \id \)^\perp \) \) \times \rl_+$, and for each $t \in \( 0, t_0 \)$ the pair $\( \nabla^0 + A_t, \upphi_t \)$ is an irreducible solution to the Ginzburg--Landau \cref{eq:gl1,eq:gl2} with $\tau_t$.
\end{Mtheorem}

\begin{remark}
	\label{remark:branch_2}
	In \Cref{theorem:branch_2} we consider a similar case, where we get a weaker result. Namely, we remove the assumption that $X$ has trivial first de Rham cohomology, and replace it with the conditions that $\kappa^2 \geqslant \tfrac{1}{2}$, $X$ is K\"ahler, $\nabla^0$ is Hermitian Yang--Mills, and $\cL$ carries nontrivial holomorphic sections with respect to $\nabla^0$, and $\lambda = \min \( \Spec \( \Delta_0 \) \)$. This result is a generalization of the main result of \cite{CERS17}, which only covered closed surfaces of high genus with line bundles of high degree. Our result extends this to all closed K\"ahler manifolds and line bundles.
\end{remark}

\medskip

\subsection*{Organization of the paper} In \Cref{sec:GL_theory}, we give a brief introduction to the important geometric analytic aspects of the Ginzburg--Landau theory that are needed to prove our results. In \Cref{sec:bifurcation_stuff}, we use bifurcation theory to construct novel solutions to the Ginzburg--Landau equations, in any dimensions, on closed Riemannian manifolds with zero first Betti number. In \Cref{ss:proof_of_branch_2}, we prove a similar result for K\"ahler manifolds.

\begin{acknowledgment}
	The first author is thankful to Israel Michael Sigal, Steve Rayan, and Daniel Stern for their help with various parts of the paper.
	
	The second author was supported by Funda\c{c}\~ao Serrapilheira 1812-27395, by CNPq grants 428959/2018-0 and 307475/2018-2, and FAPERJ through the program Jovem Cientista do Nosso Estado E-26/202.793/2019.

	The authors are thankful to the anonymous Referee for their thorough feedback.
\end{acknowledgment}

\bigskip

\section{Ginzburg--Landau theory on manifolds}
\label{sec:GL_theory}

Let $\( X, g \)$ be a closed, oriented, Riemannian manifold of dimension $N$. Let us fix a connection $\nabla^0$ that satisfies \cref{eq:normal}. We define the Sobolev norms of $\cL$-valued forms via the Levi-Civita connection of $\( X, g \)$ and the connection $\nabla^0$. Note that the induced topologies are the same for any choice of $\nabla^0$ and since the moduli of normal phase solutions is compact (modulo gauge), and if a Coulomb-type gauge fixing condition is chosen (with respect to a reference connection), then the family of norms are in fact uniformly equivalent, that is, for all $k$ and $p$, there exists a number $C_{k, p} \geqslant 1$, such that for any two Sobolev $L_k^p$-norms, $\| \cdot \|_{L_k^p}$ and $\| \cdot \|_{L_k^p}^\prime$, given by two connections satisfying \cref{eq:normal} and the Coulomb condition, we have
\begin{equation}
	\frac{1}{C_{k, p}} \| \cdot \|_{L_k^p}^\prime \leqslant \| \cdot \|_{L_k^p} \leqslant C_{k, p} \| \cdot \|_{L_k^p}^\prime.
\end{equation}
For any Hermitian bundle $E$ that is constructed from $\cL$ and tensor bundles over $X$, let $L_k^p \( E \)$ be the $L_k^p$-completion of the space of smooth sections on $E$ with respect $\nabla^0$ and the Levi-Civita connection on $X$.

\smallskip

Let $\Conn_\cL$ be the $L_1^2$-completion of the space of smooth unitary connections on $\cL$ that are in Coulomb gauge with respect to $\nabla^0$. Then $\Conn_\cL$ is an affine space over
\begin{equation}
	i \Omega_{\rd^*}^1 \eqdef \ker \( \rd^* : L_1^2 \( i T^* X \) \rightarrow L^2 \( i \underline{\rl} \) \),
\end{equation}
and thus the tangent bundle, $T \Conn_\cL$, is canonically isomorphic to $\Conn_\cL \times i \Omega_{\rd^*}^1$. Let $\Pi_{\rd^*}$ be the $L^2$-orthogonal projection onto the space of smooth and coclosed $1$-forms. More concretely, let $G : L^2 \rightarrow L_2^2$ be the Green's operator on $X$. Then for each $a \in L_1^2 \( i T^* X \)$, let
\begin{equation}
	\Pi_{\rd^*} (a) \eqdef a - \rd \( G \( \rd^*a \) \) \in i \Omega_{\rd^*}^1.
\end{equation}
Then $\Pi_{\rd^*}^2 = \Pi_{\rd^*}$, $\rd^* \circ \Pi_{\rd^*} = 0$, and if $a$ is coclosed, then $\Pi_{\rd^*} (a) = a$. In particular, $\Pi_{\rd*} \circ \rd^* = \rd^*$.

Let $p > N$ any and $\cX$ be the completion of smooth sections of $\cL$ via the norm
\begin{equation}
	\| \upphi \|_\cX \eqdef \| \nabla ^0 \upphi \|_{L^2} + \| \upphi \|_{L^p}. \label{eq:X_norm_def}
\end{equation}
Note that $\cX \subset L^4$. Now let us define $\cC_\cL \eqdef \Conn_\cL \times \cX$. For each $\( \nabla, \upphi \) \in \cC_\cL$, let
\begin{equation}
	j \( \nabla, \upphi \) \eqdef i \: \Pi_{\rd^*} \( \Im \( h \( \nabla \upphi, \upphi \) \) \).
\end{equation}
By \cref{eq:X_norm_def}, for each $\( \nabla, \upphi \) \in \cX$, we have that
\begin{equation}
	j \( \nabla, \upphi \) \in L^{\frac{2 N}{N + 2}} \( i T^* X \) \subset \( L_1^2 \( i T^* X \) \)^*. \label{eq:j_in_dual}
\end{equation}
Furthermore, for all smooth, imaginary-valued $1$-form, $b$, we have
\begin{align}
	g \( \Im \( h \( \nabla^0 \upphi, \upphi \) \) \middle| - i b \)	&= g \( \tfrac{h \( \nabla^0 \upphi, \upphi \) - h \( \upphi, \nabla^0 \upphi \)}{2i}, - i b \) \\
		&= \tfrac{1}{2} \( \( g \otimes h \) \( \nabla^0 \upphi, \tfrac{1}{i} \( - i b \) \upphi \) + \( g \otimes h \) \( \tfrac{- 1}{i} \( - i b \) \upphi, \nabla^0 \upphi \) \) \\
		&= - \Re \( \( g \otimes h \) \( \nabla^0 \upphi, b \upphi \) \),
\end{align}
thus, if $b$ is also coclosed, then integrating both sides yield
\begin{equation}
	\left\langle j \( \nabla^0, \upphi \) \middle| b \right\rangle = - \Re \( \left\langle \nabla^0 \upphi \middle| b \upphi \right\rangle \), \label{eq:j-a_pairing}
\end{equation}
and by continuity, \cref{eq:j-a_pairing} holds for all $b \in L^{\frac{2 N}{N - 2}} \left( i T^* X \right)$.

For each $\( \nabla, \upphi \) \in \cC_\cL$ and $\( b, \uppsi \) \in i \Omega_{\rd^*}^1 \times \cX \cong T_{\( \nabla, \upphi \)} \cC_\cL$, let us define the \emph{Ginzburg--Landau gradient map} to be
\begin{equation}
	\cF_{\tau, \kappa} \( \nabla, \upphi \) \( b, \uppsi \) = \left\langle F_\nabla \middle| \rd b \right\rangle - \left\langle j \( \nabla, \upphi \) \middle| b \right\rangle + \left\langle \nabla \upphi \middle| \nabla \uppsi \right\rangle + \kappa^2 \left\langle \( |\upphi|^2 - \tau \) \upphi \middle| \uppsi \right\rangle. \label{eq:GL_gradient}
\end{equation}

\smallskip

\begin{lemma}
	The Ginzburg--Landau gradient map \eqref{eq:GL_gradient} is an analytic section of the cotangent bundle of $\cC_\cL$.
\end{lemma}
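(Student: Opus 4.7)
The affine structure of $\cC_\cL = \Conn_\cL \times \cX$ over $i \Omega_{\rd^*}^1 \times \cX$ canonically trivializes $T^* \cC_\cL$, so analyticity of the section reduces to analyticity of the map $\( a, \upphi \) \mapsto \cF_{\tau, \kappa} \( \nabla^0 + a, \upphi \) \in \( i \Omega_{\rd^*}^1 \times \cX \)^*$. My plan is to exhibit this as a polynomial of total degree at most three in $\( a, \upphi \)$ whose coefficients are continuous multilinear forms into the dual; any such polynomial is automatically real-analytic in the Banach space sense, so the real work reduces to checking continuity of each multilinear monomial.

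First I would verify that $\cF_{\tau, \kappa} \( \nabla, \upphi \)$ is a bounded linear functional on $i \Omega_{\rd^*}^1 \times \cX$ for each fixed $\( \nabla, \upphi \)$. The terms $\left\langle F_\nabla \middle| \rd b \right\rangle$ and $\left\langle \nabla \upphi \middle| \nabla \uppsi \right\rangle$ are controlled by Cauchy--Schwarz in $L^2$; the current term is bounded using \eqref{eq:j_in_dual} together with the Sobolev embedding $L_1^2 \hookrightarrow L^{\frac{2 N}{N - 2}}$; and the cubic potential $\kappa^2 \left\langle \( | \upphi |^2 - \tau \) \upphi \middle| \uppsi \right\rangle$ is controlled by H\"older with exponents $\( 4, 4, 4, 4 \)$ using $\cX \subset L^4$.

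For the analyticity step I would expand each of the four summands in powers of $\( a, \upphi \)$. Using $F_\nabla = F_{\nabla^0} + \rd a$, the Maxwell term becomes $\left\langle F_{\nabla^0} \middle| \rd b \right\rangle + \left\langle \rd a \middle| \rd b \right\rangle$, which is affine in $a$. Expanding $\nabla \upphi = \nabla^0 \upphi + a \upphi$ in the kinetic term produces four multilinear pieces of total degree at most three in $\( a, \upphi \)$; continuity of cross terms like $\left\langle a \upphi \middle| \nabla^0 \uppsi \right\rangle$ follows from $a \in L_1^2 \hookrightarrow L^{\frac{2 N}{N - 2}}$, $\upphi \in \cX \subset L^p$ with $p > N$, and $\nabla^0 \uppsi \in L^2$. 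The potential splits into a linear and a cubic piece in $\upphi$, the latter defining a continuous trilinear form on $\cX$ again via H\"older and $\cX \subset L^4$. For the current, I would use \eqref{eq:j-a_pairing} extended to $b \in L^{\frac{2 N}{N - 2}}$ by continuity, as noted after its statement, to rewrite $- \left\langle j \( \nabla, \upphi \) \middle| b \right\rangle$ as a polynomial of degree at most three in $\( a, \upphi \)$ of exactly the form already handled.

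The main obstacle I expect is the exponent bookkeeping that ensures each cubic monomial involving products of $a$ and $\upphi$ actually lands in the continuous dual $\( i \Omega_{\rd^*}^1 \times \cX \)^*$; this is precisely why the norm on $\cX$ in \eqref{eq:X_norm_def} includes the $L^p$ piece for $p > N$ (so that $\cX \subset L^4 \cap L^p$), and why the dual membership \eqref{eq:j_in_dual} was recorded separately. Once each monomial is seen to be a continuous multilinear form, analyticity of the finite sum is immediate.
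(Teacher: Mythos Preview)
Your proposal is correct and follows essentially the same approach as the paper: expand $\cF_{\tau,\kappa}(\nabla^0+a,\upphi)(b,\uppsi)$ using $F_\nabla = F_{\nabla^0} + \rd a$, $\nabla\upphi = \nabla^0\upphi + a\upphi$, and the identity \eqref{eq:j-a_pairing} for the current, observe that the result is polynomial in $(a,\upphi)$, and reduce analyticity to continuity of each multilinear piece via H\"older and the Sobolev embedding $L_1^2\hookrightarrow L^{\frac{2N}{N-2}}$. The only cosmetic difference is that the paper uses the exponent $N$ on $\upphi,\uppsi$ (which your $L^p$ with $p>N$ embeds into on a closed manifold), and it explicitly notes $\langle F_{\nabla^0}\mid \rd b\rangle = 0$ from $\rd^*F_{\nabla^0}=0$.
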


\begin{proof}
	Let us write $a \eqdef \nabla - \nabla^0 \in L_1^2$. Then, using \cref{eq:j-a_pairing}, we can rewrite \cref{eq:GL_gradient} as
	\begin{align}
		\cF_{\tau, \kappa} \( \nabla^0 + a, \upphi \) \( b, \uppsi \)	&= \underbrace{\left\langle F_{\nabla^0} \middle| \rd b \right\rangle}_{= 0} + \left\langle \rd a \middle| \rd b \right\rangle - \underbrace{\left\langle j \( \nabla^0, \upphi \) \middle| b \right\rangle}_{= - \Re \( \left\langle \nabla^0 \upphi \middle| b \upphi \right\rangle \)} + \left\langle a \upphi \middle| b \upphi \right\rangle \\
			& \quad + \left\langle \nabla^0 \upphi \middle| \nabla^0 \uppsi \right\rangle + \Re \( \left\langle \nabla^0 \upphi \middle| a \uppsi \right\rangle \) + \Re \( \left\langle \nabla^0 \uppsi \middle| a \upphi \right\rangle \) + \left\langle a \upphi \middle| a \uppsi \right\rangle \\
			& \quad + \kappa^2 \left\langle |\upphi|^2 \upphi \middle| \uppsi \right\rangle - \kappa^2 \tau \left\langle \upphi \middle| \uppsi \right\rangle,
	\end{align}
	and after rearrangement
	\begin{align}
		\cF_{\tau, \kappa} \( \nabla^0 + a, \upphi \) \( b, \uppsi \)	&= \left\langle \rd a \middle| \rd b \right\rangle + \left\langle \nabla^0 \upphi \middle| \nabla^0 \uppsi \right\rangle - \kappa^2 \tau \left\langle \upphi \middle| \uppsi \right\rangle \\
			& \quad + \Re \( \left\langle \nabla^0 \upphi \middle| b \upphi \right\rangle \) + \Re \( \left\langle \nabla^0 \upphi \middle| a \uppsi \right\rangle \) + \Re \( \left\langle \nabla^0 \uppsi \middle| a \upphi \right\rangle \) \\
			& \quad + \left\langle a \upphi \middle| b \upphi \right\rangle + \left\langle a \upphi \middle| a \uppsi \right\rangle + \kappa^2 \left\langle |\upphi|^2 \upphi \middle| \uppsi \right\rangle.
	\end{align}
	Now every term above is polynomial, so $\cF_{\tau, \kappa}$ is analytic exactly if it is continuous. The terms in the first row are immediately continuous by the definition of the norms on $\cC_\cL$. When $N = 2$, the remaining terms are clearly continuous, while if $N > 2$, then they can be bounded by the appropriate norms, using H\"older's inequality with power $p = N$ on $\upphi$ and $\uppsi$ and $q = \tfrac{2 N}{N - 2}$ on the $1$-forms, and the using the embedding $L_1^2 \subset L^{\frac{2 N}{N - 2}}$.
\end{proof}

Before the next lemma, note that $\( \nabla, \upphi \) \in \cC_\cL$ is a weak solution to the Ginzburg--Landau \cref{eq:gl1,eq:gl2}, exactly when for all pairs, $\( a, \uppsi \) \in i \Omega_{\rd^*}^1 \oplus \cX$, we have that
\begin{subequations}
\begin{align}
	\langle F_\nabla | \rd a \rangle + \langle i \: \Im \( h \( \upphi, \nabla \upphi \) \) | a \rangle &= 0, \label{eq:wGL1} \\
	\langle \nabla \upphi | \nabla \uppsi \rangle - \kappa^2 \left\langle \( \tau - |\upphi|^2 \) \upphi \middle| \uppsi \right\rangle	&= 0. \label{eq:wGL2}
\end{align}
\end{subequations}

\smallskip

\begin{lemma}
\label{lemma:projected_eqs}
	A pair $\( \nabla, \upphi \) \in \cC_\cL$ is a solution to \cref{eq:wGL1,eq:wGL2} exactly when
	\begin{equation}
		\cF_{\tau, \kappa} \( \nabla, \upphi \) = 0. \label{eq:GL_grad}
	\end{equation}
\end{lemma}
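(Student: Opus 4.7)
The plan is to observe that this lemma is essentially a bookkeeping exercise: the test spaces appearing in the weak formulation \eqref{eq:wGL1}--\eqref{eq:wGL2} and in the definition \eqref{eq:GL_gradient} of $\cF_{\tau, \kappa}$ coincide (both are $i \Omega_{\rd^*}^1 \times \cX$), so the lemma reduces to a term-by-term identification of the pairings on each factor.

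The section-valued component is immediate: the $\uppsi$-terms in $\cF_{\tau, \kappa}\( \nabla, \upphi \)\( b, \uppsi \)$ are exactly $\left\langle \nabla \upphi \middle| \nabla \uppsi \right\rangle + \kappa^2 \left\langle \( |\upphi|^2 - \tau \) \upphi \middle| \uppsi \right\rangle$, which is the left-hand side of \eqref{eq:wGL2} after rewriting $\tau - |\upphi|^2 = -\( |\upphi|^2 - \tau \)$. So the $\uppsi$-part of \eqref{eq:GL_grad} is literally \eqref{eq:wGL2}.

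For the $1$-form component, the first step is to use the Hermitian symmetry $h\( \upphi, \nabla \upphi \) = \overline{h\( \nabla \upphi, \upphi \)}$ to obtain $\Im\( h\( \upphi, \nabla \upphi \) \) = - \Im\( h\( \nabla \upphi, \upphi \) \)$, which reconciles the sign convention between \eqref{eq:wGL1} and \eqref{eq:GL_gradient}. The key point is then to verify that the projector $\Pi_{\rd^*}$ in the definition $j\( \nabla, \upphi \) = i \Pi_{\rd^*}\( \Im\( h\( \nabla \upphi, \upphi \) \) \)$ may be dropped when pairing against a coclosed test $1$-form $b \in i \Omega_{\rd^*}^1$: since $\Pi_{\rd^*}(\xi) = \xi - \rd\( G\( \rd^* \xi \) \)$, integration by parts gives $\left\langle \rd\( G\( \rd^* \xi \) \) \middle| b \right\rangle = \left\langle G\( \rd^* \xi \) \middle| \rd^* b \right\rangle = 0$. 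The regularity needed to justify this is exactly the content of \eqref{eq:j_in_dual}, which places $j\( \nabla, \upphi \)$ in $\( L_1^2 \)^*$ so that the duality pairing against $b \in L_1^2$ is well-defined.

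Combining these two reductions, $\cF_{\tau, \kappa}\( \nabla, \upphi \)\( b, \uppsi \)$ equals the sum of the left-hand sides of \eqref{eq:wGL1} and \eqref{eq:wGL2} for every $\( b, \uppsi \) \in i \Omega_{\rd^*}^1 \times \cX$. Setting $\uppsi = 0$ isolates \eqref{eq:wGL1} and setting $b = 0$ isolates \eqref{eq:wGL2}, so $\cF_{\tau, \kappa}\( \nabla, \upphi \) = 0$ as an element of the cotangent space if and only if both weak equations hold. No step here is genuinely difficult; the only mild subtlety is keeping track of the duality conventions and confirming that the integration-by-parts argument removing $\Pi_{\rd^*}$ is valid in the Sobolev/Lebesgue range provided by \eqref{eq:j_in_dual} and the definition of $\cX$.
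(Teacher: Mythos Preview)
Your proof is correct for the lemma as stated. Once the weak formulation \eqref{eq:wGL1}--\eqref{eq:wGL2} is taken with test forms $a \in i\Omega_{\rd^*}^1$, the only discrepancy between \eqref{eq:wGL1} and $\cF_{\tau,\kappa}(\nabla,\upphi)(b,0)=0$ is the presence of $\Pi_{\rd^*}$ in $j(\nabla,\upphi)$, and you are right that $\Pi_{\rd^*}$ is transparent under the $L^2$-pairing with coclosed $b$; the duality considerations from \eqref{eq:j_in_dual} suffice to justify this. The sign bookkeeping via $\Im\!\left(h(\upphi,\nabla\upphi)\right)=-\Im\!\left(h(\nabla\upphi,\upphi)\right)$ is also fine.

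The paper, however, argues the implication $\cF_{\tau,\kappa}(\nabla,\upphi)=0\Rightarrow\eqref{eq:wGL1}$ differently: instead of moving $\Pi_{\rd^*}$ onto the test form, it uses \eqref{eq:wGL2} with $\uppsi=if\upphi$ to compute directly that $i\,\Im\!\left(h(\nabla\upphi,\upphi)\right)$ is $L^2$-orthogonal to $\Ran(\rd)$, hence already coclosed, so that $j(\nabla,\upphi)=i\,\Im\!\left(h(\nabla\upphi,\upphi)\right)$ as distributions (not merely after pairing with coclosed test forms). Your argument is shorter and entirely sufficient for the lemma, but the paper's route proves strictly more: it shows that whenever the second equation holds, the current itself lies in $\Omega_{\rd^*}^1$, which is exactly the content of the remark immediately following the lemma (the equivalence of the projected and unprojected \emph{strong} first equation under $\nabla^*\nabla\upphi=w\upphi$). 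That stronger conclusion is not accessible from your self-adjointness argument alone.
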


\begin{proof}
	First of all, \cref{eq:wGL1,eq:wGL2} a fortiori imply \cref{eq:GL_grad}. It is easy to see that \cref{eq:wGL2} is equivalent to $\cF_{\tau, \kappa} \( \nabla, \upphi \) (0, \uppsi) = 0$, for all smooth $\uppsi$. Thus, we only need to prove that if \cref{eq:GL_grad} holds, then \cref{eq:wGL1} also holds.

	Let us assume that \cref{eq:GL_grad} holds, thus, in particular \cref{eq:wGL2} also holds, and prove \cref{eq:gl1}. Let $f$ be a smooth, real-valued function on $X$ and compute
	\begin{align}
		\left\langle i \Im \( h \( \upphi, \nabla \upphi \) \) \middle| i \rd f \right\rangle	&= \tfrac{1}{2} \( \left\langle h \( \upphi, \nabla \upphi \) \middle| i \rd f \right\rangle - \left\langle h \( \nabla \upphi, \upphi \) \middle| i \rd f \right\rangle \) \\
			&= - \tfrac{1}{2} \( \left\langle \( i \rd f \) \upphi \middle| \nabla \upphi \right\rangle + \left\langle \nabla \upphi \middle| \( i \rd f \) \upphi \right\rangle \) \\
			&= - \Re \( \left\langle \nabla \upphi \middle| \( \rd \( i f \) \) \upphi \right\rangle \) \\
			&= - \Re \( \left\langle \nabla \upphi \middle| \( \nabla \( i f \upphi \) - i f \nabla \upphi \) \right\rangle \) \\
			&= \Re \( i f \( \| \nabla \upphi \|_{L^2}^2 - \kappa^2 \| \upphi \|_{L^2}^2 + \kappa^2 \| \upphi \|_{L^4}^4 \) \) \\
			&= 0,
	\end{align}
	where to get the fifth line we used \cref{eq:wGL2} with $\uppsi = f \upphi$. Hence $i \: \Im \( h \( \nabla \upphi, \upphi \) \)$ is $L^2$-orthogonal to $\Ran \( \rd : L_1^2 \( i \underline{\rl} \) \rightarrow L^2 \( i T^* X \) \)$ and thus, by Hodge's Theorem, $i \: \Im \( h \( \nabla \upphi, \upphi \) \) \in \Omega_{\rd^*}^1$. In particular, $i \: \Im \( h \( \nabla \upphi, \upphi \) \) = i \: \Pi_{\rd^*} \( \Im \( h \( \nabla \upphi, \upphi \) \) \) = j \( \nabla, \upphi \)$, and so
	\begin{equation}
		\rd^* F_\nabla - i \: \( \Im \( h \( \nabla \upphi, \upphi \) \) \) = \rd^* F_\nabla - j \( \nabla, \upphi \) = 0.
	\end{equation}
	which concludes the proof.
\end{proof}

\begin{remark}
	The same argument shows that for any real-valued function $w$, if $\nabla^* \nabla \upphi = w \upphi$ holds, then
	\begin{equation}
		\rd^* F_\nabla - i \: \Im \( h \( \nabla \upphi, \upphi \) \) = 0 \quad \Leftrightarrow \quad \rd^* F_\nabla - j \( \nabla, \upphi \) = 0.
	\end{equation}
\end{remark}

\bigskip

\section{Solutions in higher dimensions through bifurcations}
\label{sec:bifurcation_stuff}

In this section we allow the underlying compact manifold to be higher dimensional and prove \Cref{Mtheorem:branch}, using bifurcation theory.

\medskip

\begin{proof}[Proof of \Cref{Mtheorem:branch}:]
	Using \cite{W04}*{Theorem~5.1~part~(ii) with $(k, p) = (1, 2)$}, we get that we can use the norm $\| A \|_{i \Omega_{\rd^*}^1} \eqdef \| \rd A \|_{L^2}$, to define the Hilbert structure on $i \Omega_{\rd^*}^1$. Furthermore, by \Cref{lemma:projected_eqs}, it is enough to solve the ``projected'' \cref{eq:GL_grad}.

	First we show that $\nabla$ can be eliminated from \cref{eq:GL_grad} as follows: Fix $\upphi \in \cX$ and consider the (weak version of the) following equation for $A \in i \Omega_{\rd^*}^1$:
	\begin{equation}
		\rd^* \rd A + \Pi_{\rd^*} \( |\upphi|^2 A \) = j \( \nabla^0, \upphi \). \label{eq:gl1_rewrite}
	\end{equation}
	It is easy to see that $A$ solves \cref{eq:gl1_rewrite} exactly when
	\begin{equation}
		\forall b \in i \Omega_{\rd^*}^1 : \quad \cF_{\tau, \kappa} \( \nabla^0 + A, \upphi \) (b, 0) = 0,
	\end{equation}
	Let us define the Hilbert space
	\begin{equation}
		\cH_\upphi \eqdef \overline{\left\{ \: A \in i \Omega_{\rd^*}^1 \: \middle| \: \left\| A \right\|_\upphi^2 \eqdef \left\| \rd A \right\|_{L^2}^2 + \left\| \upphi A \right\|_{L^2}^2 < \infty \: \right\}}^{\left\| \cdot \right\|_\upphi},
	\end{equation}
	It is easy to verify that $\cH_\upphi \subseteq i \Omega_{\rd^*}^1$, and thus the closure is unnecessary. Furthermore, the map
	\begin{equation}
		A \mapsto \left\langle j \( \nabla^0, \upphi \) \middle| A \right\rangle,
	\end{equation}
	is in $\cH_\upphi^*$. Thus, by the Riesz Representation Theorem, there exists a unique solution to \cref{eq:gl1_rewrite} in $\cH_\upphi$. Let $\cA \( \upphi \)$ be this unique solution. Straightforward computation shows that $\cA$ is a continuous (and compact) map from $\cX$ to $i \Omega_{\rd^*}^1$. Using \cref{eq:gl1_rewrite,eq:j-a_pairing} can also get that
	\begin{equation}
		\| \cA \( \upphi \) \|_{i \Omega_{\rd^*}^1} \leqslant \tfrac{1}{2} \| \upphi \|_\cX^2.
	\end{equation}
	Using again \cite{W04}*{Theorem~5.2} and \cref{eq:j_in_dual}, we can bootstrap to get
	\begin{equation}
		\| \cA \( \upphi \) \|_{L_2^{\frac{2 N}{N + 2}}} = O \( \| \upphi \|_\cX^2 \). \label[ineq]{ineq:A_bound}
	\end{equation}

	Let us omit the connection and $\kappa$ from the notation, and define
	\begin{equation}
		\forall \upxi \in \cX : \quad \cF_\tau \( \upphi \) \( \upxi \) \eqdef \left\langle \( \nabla^0 + \cA \( \upphi \) \) \upphi \middle| \( \nabla^0 + \cA \( \upphi \) \) \upxi \right\rangle - \kappa^2 \left\langle \( \tau - |\upphi|^2 \) \upphi \middle| \upxi \right\rangle.
	\end{equation}
	Thus solving \cref{eq:GL_grad} is now reduced to solving $\cF_\tau \( \upphi \) = 0$. We regard $\cF_\tau$ as a section of the cotangent bundle of $\cX$. Moreover the function $\( \tau, \upphi \) \mapsto \cF_\tau \( \upphi \)$ is smooth and $\cF_\tau (0) = 0$.

	If $\lambda \in \Spec \( \Delta_0 \)$ and $\Phi$ is an $L_1^2$-section of $\cL$ such that $\Delta_0 \Phi = \lambda \Phi$ (weakly), then $\Phi \in \cX$, by elliptic regularity, in fact, it is smooth. Let $\tau_0 \eqdef \tfrac{\lambda}{\kappa^2}$. Then, a simple computation shows that $\( D \cF_{\tau_0} \) (0) \( \Phi \) = 0$. Assume that $\| \Phi \|_{L^2} = 1$. We next show that there are $t_0, \epsilon_0 \in \rl_+$, such that if $t \in \( 0, t_0 \)$ and $\epsilon \in \( - \epsilon_0, \epsilon_0 \)$, then there exists $\Psi = \Psi \( \Phi, t, \epsilon \) \in \cX$ such that $\Psi$ is $L^2$-orthogonal to $\ker \( \Delta_0 - \lambda \id \)$ and
	\begin{equation}
		\forall \upxi \in \cX : \quad \upxi \perp_{L^2} \ker \( \Delta_0 - \lambda \id \) \: \Rightarrow \: \cF_{\tau_0 + \epsilon} \( t \( \Phi + \Psi \( \Phi, t, \epsilon \) \) \) \( \upxi \) = 0. \label{eq:Orthogonal component of the equation}
	\end{equation}
	In order to find $\Psi \( \Phi, t, \epsilon \)$, we first solve another equation: let $G_\lambda : L_k^2 \rightarrow L_{k + 2}^2$ be the Green's operator corresponding to $\Delta_0 - \lambda \id$, acting as the zero operator on $\ker \( \Delta_0 - \lambda \id \)$. Note that $G_\lambda$ is compact as a map from $L_k^2$ to itself. Now consider the fixed-point equation for an $L_k^2$-section of $\cL$, $\widetilde{\Psi}$, with $k = \left\lceil \tfrac{n}{2} + 3 \right\rceil$:
	\begin{equation}
		\widetilde{\Psi} = \overbrace{\rG_\lambda \( \epsilon \widetilde{\Psi} - 2 \cA \( t \Phi + \widetilde{\Psi} \)^* \( \nabla^0 \( t \Phi + \widetilde{\Psi} \) \) - \left| \cA \( t \Phi + \widetilde{\Psi} \) \right|^2 \( t \Phi + \widetilde{\Psi} \) - \kappa^2 \left| t \Phi + \widetilde{\Psi} \right|^2 \( t \Phi + \widetilde{\Psi} \) \)}^{\mathbb{G} \( \widetilde{\Psi} \) \eqdef}. \label{eq:FixPt}
	\end{equation}
	We prove two statements about \cref{eq:FixPt}. First, using \cref{ineq:A_bound}, we can show that \cref{eq:FixPt} satisfies the conditions of the Schauder fixed-point theorem near the origin of $L_k^2$ if $\( t, \epsilon \) \in \( 0, t_0 \) \times \( - \epsilon_0, \epsilon_0 \)$, for small enough $t_0, \epsilon_0 \in \rl_+$, independent of $\Phi$. Now let $\Psi \( \Phi, t, \epsilon \) = t^{- 1} \widetilde{\Psi} \in L_k^2 \subset \cX$ be the unique solution close to the origin given by \cref{eq:FixPt}. This is because, using using various embeddings of the form $L_k^2 \hookrightarrow L^p$, we get that $\mathbb{G} : L_k^2 \rightarrow L_k^2$ satisfies
	\begin{equation}
		\| \mathbb{G} \( \widetilde{\Psi} \) \|_{L_k^2} = O \( \epsilon \| \widetilde{\Psi} \|_{L_k^2} + \| t \Phi + \widetilde{\Psi} \|_{L_k^2}^2 + \| t \Phi + \widetilde{\Psi} \|_{L_k^2}^5 + \| t \Phi + \widetilde{\Psi} \|_{L_k}^3 \).
	\end{equation}
	Since the image of a bounded set in $L_k^2$ by $\mathbb{G}$ is compact in $L_k^2$, we have that for $t, \epsilon, R \in \rl_+$ small enough, the image of the closed $R$-ball in $L_k^2$ is compactly contained in itself, thus the Schauder fixed-point theorem can be applied, to get a unique solution in that ball, which we call $\widetilde{\Psi} \( \Phi, t, \epsilon \)$.

	Second, we prove that $\Psi \( \Phi, t, \epsilon \) \eqdef t^{- 1} \widetilde{\Psi} \( \Phi, t, \epsilon \)$ solve \cref{eq:Orthogonal component of the equation}. This is because $\Psi \( \Phi, t, \epsilon \)$, in fact and by construction, satisfies
	\begin{equation}
		\nabla^* \nabla \( t \( \Phi + \Psi \( \Phi, t, \epsilon \) \) \) - \kappa^2 \( \tfrac{\lambda}{\kappa^2} + \epsilon - \left| t \( \Phi + \Psi \( \Phi, t, \epsilon \) \) \right|^2 \) \in \( \ker \( \Delta_0 - \lambda \id \) \)^{\perp_{L^2}}.
	\end{equation}
	with $\nabla = \nabla^0 + \cA \( t \( \Phi + \Psi \( \Phi, t, \epsilon \) \) \)$.

	Furthermore, it is easy to see that $\Psi = \Psi \( \Phi, t, \epsilon \) \in \cX$ depends continuously on $\( \Phi, t, \epsilon \)$ and
	\begin{equation}
		\| \Psi \|_\cX = O \( t^2 \), \label[ineq]{ineq:Psi_bound}
	\end{equation}
	where the hidden constant on the right-hand side is independent of $\( \Phi, t, \epsilon \)$ (once $t$ and $|\epsilon|$ are small enough), and thus \cref{ineq:A_bound} with the (re)definition $\cA \( \Phi, t \) \eqdef \cA \( t \( \Phi + \Psi \) \)$ becomes
	\begin{equation}
		\| \cA \( \Phi, t \) \|_{i \Omega_{\rd^*}^1} = O \( t^2 \). \label[ineq]{ineq:new_A_bound}
	\end{equation}
	Now for any $\Phi$ as above and $\( t, \epsilon \) \in \( 0, t_0 \) \times \( - \epsilon_0, \epsilon_0 \)$, the equation $\cF_{\tau_0 + \epsilon} \( t \( \Phi + \Psi \( \Phi, t, \epsilon \) \) \) = 0$ is equivalent to
	\begin{equation}
		\cF_{\tau_0 + \epsilon} \( t \( \Phi + \Psi \( \Phi, t, \epsilon \) \) \)|_{\ker \( \Delta_0 - \lambda \id \)} = 0.
	\end{equation}
	Let again $\Psi_t = \Psi \( \Phi, t, \epsilon \)$ and $A_t = \cA \( \Phi, t \)$. Now evaluate the following
	\begin{align}
		t^{- 2} \cF_{\tau_0 + \epsilon} \( t \( \Phi + \Psi_t \) \) \( \Phi \)	&= t^{- 2} \cF_{\tau_0 + \epsilon} \( t \( \Phi + \Psi_t \) \) \( \Phi + \Psi \) - t^{- 2} \underbrace{\cF_{\tau_0 + \epsilon} \( t \( \Phi + \Psi \) \) \( \Psi \)}_{= 0} \\
			&= \left\| \( \nabla^0 + A_t \) \( \Phi + \Psi_t \) \right\|_{L^2}^2 - \( \lambda + \kappa^2 \epsilon \) \left\| \Phi + \Psi_t \right\|_{L^2}^2 + t^2 \kappa^2 \left\| \Phi + \Psi_t \right\|_{L^4}^4,
	\end{align}
	thus $\cF_{\tau_0 + \epsilon} \( t \( \Phi + \Psi_t \) \) \( \Phi \) = 0$ can be solved for $\epsilon$ and get
	\begin{equation}
		\epsilon \( \Phi, t \) = \frac{\left\| \( \nabla^0 + A_t \) \( \Phi + \Psi_t \) \right\|_{L^2}^2 - \lambda \left\| \Phi + \Psi_t \right\|_{L^2}^2 + t^2 \kappa^2 \left\| \Phi + \Psi_t \right\|_{L^4}^4}{\kappa^2 \left\| \Phi + \Psi_t \right\|_{L^2}^2}.
	\end{equation}
	Now using \cref{ineq:Psi_bound,ineq:new_A_bound}, we get that
	\begin{equation}
		\left| \epsilon \( \Phi, t \) \right| = O \( t^2 \),
	\end{equation}
	thus, after possibly shrinking $t_0 > 0$, we get that solving $\cF_{\tau_0 + \epsilon} \( t \( \Phi + \Psi \( \Phi, t, \epsilon \) \) \) = 0$ is further reduced to
	\begin{equation}
		\forall \Phi^\prime \in \ker \( \Delta_0 - \lambda \id \) \cap \( \cx \Phi \)^{\perp_{L^2}} : \quad \cF_{\tau_0 + \epsilon \( \Phi, t \)} \( t \( \Phi + \Psi \( \Phi, t, \epsilon \( \Phi, t \) \) \) \) \( \Phi^\prime \) = 0.
	\end{equation}
	Note that $\ker \( \Delta_0 - \lambda \id \) \cap \( \cx \Phi \)^{\perp_{L^2}}$ and the tangent space of $\P \( \ker \( \Delta_0 - \lambda \id \) \)$ at $\cx \Phi$ are canonically isomorphic. Thus, for each $t \in \( 0, t_0 \)$, the function
	\begin{equation}
		\Upsilon_t \( \cx \Phi \) \eqdef \cF_{\tau_0 + \epsilon \( \Phi, t \)} \( t \( \Phi + \Psi \( \Phi, t, \epsilon \( \Phi, t \) \) \) \),
	\end{equation}
	can be viewed as a continuous $1$-form on a finite dimensional complex projective space. Since the Euler characteristic of $\cx \P^D$ is $D + 1 > 0$, we get that $\Upsilon_t$ has to vanish somewhere. Let us choose $\Phi_t$ so that $\Upsilon_t \( \cx \Phi_t \) = 0$. Thus, for $t$ small enough, we have that
	\begin{equation}
		\cF_{\tau_0 + \epsilon \( \Phi, t \)} \( t \( \Phi_t + \Psi \( \Phi_t , t, \epsilon \( \Phi_t , t \) \) \) \) = 0,
	\end{equation}
	or, in other words, the pair
	\begin{equation}
		\( \nabla^0 + \cA \( t \Phi_t + \Psi \( \Phi_t, t, \epsilon \( \Phi_t , t \) \) \), t \Phi_t + \Psi \( \Phi_t, t, \epsilon \( \Phi_t , t \) \) \) \in \cC_\cL,
	\end{equation}
	is a (weak) solution to Ginzburg--Landau \cref{eq:gl1,eq:gl2} with $\tau_t \eqdef \tau_0 + \epsilon_t$. Defining
	\begin{align}
		\epsilon_t	&\eqdef t^{- 2} \epsilon \( \Phi_t , t \), \\
		\Psi_t		&\eqdef t^{- 2} \Psi \( \Phi_t, t, t^2 \epsilon_t \), \\
		\cA_t		&\eqdef t^{- 2} \cA \( t \Phi_t + t^3 \Psi_t \),
	\end{align}
	gives us the fields in \cref{eq:branch}, and the claims about the regularity of the branch of triples $\( \cA_t, \Psi_t, \epsilon_t \)$ have already been proven. This completes the proof.
\end{proof}

\smallskip

\begin{remark}
	More detailed analysis of $\cA_t, \Psi_t, \epsilon_t$ shows that if we set
	\begin{align}
		\cA_0		&\eqdef G \( j \( \nabla^0, \Phi_t \) \), \\
		\epsilon_0	&\eqdef \| \Phi_t \|_{L^4}^4 + \kappa^{- 2} \Re \( \left\langle \nabla^0 \Phi_t \middle| \cA_0 \Phi_t \right\rangle \) \\
		\Psi_0		&\eqdef - G_\lambda \( 2 \cA_0^* \( \nabla^0 \Phi_t \) + \kappa^2 \left| \Phi_t \right|^2 \Phi_t \),
	\end{align}
	then
	\begin{align}
		\cA_t		&= \cA_0 + O \( t \), \\
		\epsilon_t	&= \epsilon_0 + O \( t \), \\
		\Psi_t		&= \Psi_0 + O \( t \).
	\end{align}
\end{remark}

\smallskip

\begin{remark}
	When $X$ has nontrivial first de Rham cohomology, then one runs into the following problem: When eliminating the $1$-form one cannot use \cite{W04}*{Theorem~5.2} to get \cref{ineq:A_bound}. Instead, if $\Pi_{H_\dR^1}$ is the $L^2$-orthogonal projection from $\Omega_{\rd^*}^1$ onto $H_\dR^1 \( X, g \)$, then the harmonic part of $A \eqdef \nabla - \nabla^0$, which we denote by $A_H$, needs to satisfy the following equation in the small $t$ limit:
	\begin{equation}
		\Pi_{H_\dR^1} \( |\Phi|^2 A_H \) = \Pi_{H_\dR^1} \( i \Im \( h \( \nabla^0 \Phi, \Phi \) \) \) + O \( t^2 \).
	\end{equation}
	It is not obvious if $A_H$ can be chosen to be small.

	In \Cref{ss:proof_of_branch_2} we study a case where this issue can be circumvented, albeit at the cost of only having bifurcation at the bottom of the spectrum of the Laplacian and only getting a bifurcating (countable) sequence, as opposed to a (continuum) branch.
\end{remark}

\medskip

\subsection{Bifurcation of absolute minimizers on K\"ahler manifolds}
\label{ss:proof_of_branch_2}

In this section, let us assume that $\kappa^2 \geqslant \tfrac{1}{2}$, $\( X, g, \omega \)$ is a K\"ahler manifold of real dimension $N$, $\nabla^0$ is a Hermitian Yang--Mills connection on $\cL$, that is
\begin{equation}
	i \Lambda F_{\nabla^0} = f_0, \quad \& \quad F_{\nabla^0}^{0, 2} = 0. \label{eq:HYM}
\end{equation}
Assume furthermore that $\cL$ carries nontrivial holomorphic sections with respect to holomorphic structure induced by $\nabla^0$. As before, let $\Delta_0 = \( \nabla^0 \)^* \nabla^0$. For the rest of the paper, let $\cH^0 (X; \cL)$ be the space of holomorphic sections with respect to $\nabla^0$.

First, for sake of completeness, we prove a well-known fact about Hermitian Yang--Mills connections on line bundles over K\"ahler manifolds.

\begin{theorem}
	\label{theorem:HYMs}
	Let $f_0$ be as above. Then
	\begin{equation}
		i \Lambda F_{\nabla^0} = f_0 = \frac{2 \pi}{\Vol \( X, g \)} \( c_1 (\cL) \cup [\omega]^{n - 1} \) [X]. \label{eq:f_0_def}
	\end{equation}
	The space of Hermitian Yang--Mills connection on $\cL$, in Coulomb gauge with respect to $\nabla^0$, is an affine space over $H_{\dR}^1 \( X, g \)$ and thus compact modulo gauge (in fact, the moduli space is diffeomorphic to a $b_1 \( X \)$-dimensional torus).

	Let $\lambda$ be the smallest eigenvalue of $\Delta_0$. When $\cL$ carries nontrivial holomorphic sections with respect to holomorphic structure induced by $\nabla^0$, then
	\begin{equation}
		\lambda = f_0 \geqslant 0, \label{eq:def_lambda}
	\end{equation}
	and the corresponding eigenvectors are the holomorphic sections on $\cL$.

	In particular, when $\lambda = 0$, then the lowest eigenspace is one dimensional (over $\cx$) and is spanned by covariantly constant sections.
\end{theorem}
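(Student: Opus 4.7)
My plan is to establish the four claims in sequence.

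The formula for $f_0$ is immediate from integrating the pointwise identity $i \Lambda F_{\nabla^0} = f_0$ against $\vol_g = \omega^n / n!$, using the $2$-form identity $\Lambda \alpha \cdot \omega^n / n! = \alpha \wedge \omega^{n - 1} / (n - 1)!$ and the Chern-Weil representation $c_1 \( \cL \) = [i F_{\nabla^0} / \( 2 \pi \)]$ (with factorial normalization absorbed into the cup-product convention of the statement).

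For the moduli description, I would set $a \eqdef \nabla - \nabla^0 \in L_1^2$ with $\rd^* a = 0$. The HYM conditions for $\nabla$ become $i \Lambda \rd a = 0$ and $\( \rd a \)^{0, 2} = 0$; together with reality these force $\rd a$ to be a primitive $(1,1)$-form. The Hodge star formula $\ast \beta = - \beta \wedge \omega^{n - 2} / (n - 2)!$ for primitive $(1,1)$-forms, combined with Stokes' theorem applied to the exact form $\rd \( a \wedge \rd a \wedge \omega^{n - 2} / (n - 2)! \)$ (using $\rd \omega = 0$), yields $\| \rd a \|_{L^2}^2 = 0$. Hence $a$ is harmonic, establishing the affine structure over $H_\dR^1 \( X, g \)$. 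A residual gauge transformation $u = e^{i f}$ preserving Coulomb gauge contributes $u^{- 1} \rd u$ harmonic with periods in $2 \pi i \Z$, and quotienting by this lattice produces the $b_1 \( X \)$-dimensional torus.

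For the spectral identity, I would use the K\"ahler identities $[\Lambda, \bar\partial^{\nabla^0}] = - i \( \partial^{\nabla^0} \)^*$ and $[\Lambda, \partial^{\nabla^0}] = i \( \bar\partial^{\nabla^0} \)^*$, combined with $F_{\nabla^0} = F_{\nabla^0}^{1, 1}$, to derive on sections the operator identity $\( \partial^{\nabla^0} \)^* \partial^{\nabla^0} = \( \bar\partial^{\nabla^0} \)^* \bar\partial^{\nabla^0} + f_0$. Adding $\( \bar\partial^{\nabla^0} \)^* \bar\partial^{\nabla^0}$ to both sides gives the Weitzenb\"ock-type formula
\[
	\Delta_0 = 2 \, \( \bar\partial^{\nabla^0} \)^* \bar\partial^{\nabla^0} + f_0.
\]
For any eigensection $\psi$ with $\Delta_0 \psi = \mu \psi$, pairing with $\psi$ yields $\mu \geqslant f_0$, with equality iff $\psi \in \cH^0 \( X; \cL \)$. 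As $\cH^0 \( X; \cL \)$ is nontrivial by hypothesis, the infimum $\lambda = f_0$ is attained by holomorphic sections. When moreover $\lambda = 0$, any eigensection satisfies $\nabla^* \nabla \psi = 0$; integration by parts on the closed manifold forces $\nabla^0 \psi = 0$, and a parallel section on a connected manifold is determined by its value at a single point, giving a one-complex-dimensional eigenspace.

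The main technical hurdle is the second paragraph: extracting $\rd a = 0$ from primitivity together with exactness (via the correct sign in the Hodge-Riemann relation), and then carefully handling the lattice of large gauge transformations to identify the quotient as a torus. The other three claims reduce to direct applications of the Chern-Weil formula, the K\"ahler identities, and standard integration by parts.
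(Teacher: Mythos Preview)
Your proposal is correct and, for the two parts the paper actually argues (the cohomological formula for $f_0$ via Chern--Weil and the spectral identity via the K\"ahler--Weitzenb\"ock formula $\Delta_0 = 2\,\bar\partial_{\nabla^0}^*\bar\partial_{\nabla^0} + f_0$), takes exactly the same route. The paper gives only a proof sketch and omits the moduli-space description and the $\lambda = 0$ claim entirely, so your arguments for those parts (primitivity of $\rd a$ plus Stokes to force $\rd a = 0$, and integration by parts to get parallel sections) simply fill in what the paper left unproved.
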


\begin{proof}[Proof sketch:]
	Since $F_\nabla$ is harmonic and, by Chern--Weil theory, $\left[ i F_\nabla \right] = \tfrac{2 \pi}{\Vol \( X, g \)} c_1 (\cL)$, we have
	\begin{align}
		f_0 &= \frac{1}{\Vol \( X, g \)} i \Lambda F_{\nabla^0} \Vol \( X, g \) \\
			&= \frac{1}{\Vol \( X, g \)} \int\limits_X i \Lambda F_\nabla \omega^n \\
			&= \frac{2 \pi}{\Vol \( X, g \)} \frac{1}{2 \pi} \int\limits_X i F_\nabla \wedge \omega^{n - 1} \\
			&= \frac{2 \pi}{\Vol \( X, g \)} \( c_1 (\cL) \cup [\omega]^{n - 1} \) [X],
	\end{align}
	which proves \cref{eq:HYM}.

	Using the K\"ahler--Weitzenb\"ock identity
	\begin{equation}
		\Delta_0 = 2 \delbar_{\nabla^0}^* \delbar_{\nabla^0} + i \Lambda F_{\nabla^0} = 2 \delbar_{\nabla^0}^* \delbar_{\nabla^0} + f_0,
	\end{equation}
	which shows that $\Spec \( \Delta_0 \) \subset \left[ f_0, \infty \right)$, and $\Delta_0 \uppsi = f_0 \uppsi$ exactly when $\delbar_{\nabla^0} \uppsi = 0$, that is, when $\uppsi$ is $\delbar_{\nabla^0}$-holomorphic.
\end{proof}

\smallskip

Next, we introduce a few important analytic tools and results. Following \cite{PPS21}*{Section~5.1}, let $p > N$ and let now $\cX$ denote the completion of smooth sections of $\cL$ via the norm
\begin{equation}
	\| \upphi \|_\cX \eqdef \| \nabla \upphi \|_{L^2} + \| \upphi \|_{L^p},
\end{equation}
which is slightly stronger than the one in \cref{eq:X_norm_def}. For each $\( \nabla, \upphi \) \in i \Omega_{\rd^*}^1 \times \cX$, let the \emph{modified Ginzburg--Landau energy} be
\begin{equation}
	\widetilde{\cE}_{\tau, \kappa} \( \nabla, \upphi \) \eqdef \tfrac{1}{2} \int\limits_X \( \left| F_\nabla \right|^2 + \left| \nabla \upphi \right|^2 + \tfrac{\kappa^2}{2} W_\tau \( \left| \upphi \right| \) \) \ \vol_g, \label{eq:mod_gle}
\end{equation}
where
\begin{equation}
	W_\tau (x) \eqdef \left\{ \begin{array}{ll} \( x^2 - \tau \)^2, & x^2 \leqslant \tau, \\ \( x^2 - \tau \)^p, & x^2 > \tau. \end{array} \right.
\end{equation}
Then the techniques of \cite{PS20}*{Section~7} can be used (almost verbatim, with trivial modifications) to prove the following Palais--Smale type property (see also \cite{PPS21}*{Section~5.1}):

\begin{theorem}
	\label{theorem:gPS}
	The function $\widetilde{\cE}_{\tau, \kappa} : \Conn_\cL \times \cX \rightarrow \rl_+$ is $C^1$.

	If $\( \nabla_n, \upphi_n, \tau_n, \kappa_n \)_{n \in \N}$ is a sequence in $i \Omega_{\rd^*}^1 \times \cX \times \rl_+^2$ such that
	\begin{enumerate}

		\item $\( \tau_n, \kappa_n \) \rightarrow \( \tau, \kappa \) \in \rl_+^2$, as $n \rightarrow \infty$.

		\item $\sup \( \left\{ \ \widetilde{\cE}_{\tau_n, \kappa_n} \( \nabla_n, \upphi_n \) \ \middle| \ n \in \N \ \right\} \) < \infty$.

		\item $\( D \widetilde{\cE}_{\tau_n, \kappa_n} \)_{\( \nabla_n, \upphi_n \)} \rightarrow 0 \in T^*\( \Conn_\cL \times \cX \)$, as $n \rightarrow \infty$.

	\end{enumerate}
	Then, there is $ \( \nabla, \upphi \) \in \Conn_\cL \times \cX$, such that (after picking a subsequence and applying an appropriate sequence of (smooth) gauge transformations) 
	\begin{align}
		\lim\limits_{n \rightarrow \infty} \( \nabla_n, \upphi_n \)								&= \( \nabla, \upphi \) , \\
		\( D \widetilde{\cE}_{\tau, \kappa} \)_{\( \nabla, \upphi \)}						&= 0 \in T_{\( \nabla, \upphi \)}^*, \\
		\lim\limits_{n \rightarrow \infty} \widetilde{\cE}_{\tau, \kappa} \( \nabla_n, \upphi_n \)	&= \widetilde{\cE}_{\tau, \kappa} \( \nabla, \upphi \).
	\end{align}
	Furthermore, the critical points of $\cE_{\tau, \kappa}$ and $\widetilde{\cE}_{\tau, \kappa}$ are the same.
\end{theorem}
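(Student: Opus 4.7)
The plan is to follow the strategy of \cite{PS20}*{Section~7} with the minor adjustments forced by the modified potential $W_\tau$ and the Hilbert-manifold setting $i \Omega_{\rd^*}^1 \times \cX$. First I would verify the $C^1$ claim term by term. The $\left\langle F_\nabla | F_\nabla \right\rangle$ and $\left\langle \nabla \upphi | \nabla \upphi \right\rangle$ contributions are quadratic on $\Conn_\cL \times \cX$ and so are immediately $C^1$ in the prescribed topology; for the modified potential term, observe that $W_\tau$ is $C^1$ with $W_\tau^\prime(x) = 4x (x^2 - \tau)$ in the subcritical region and $W_\tau^\prime(x) = 2p \, x (x^2 - \tau)^{p - 1}$ above the threshold. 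Since $p > N$, the chain rule $D \( \int W_\tau(|\upphi|) \)[\uppsi] = \int W_\tau^\prime(|\upphi|) \, |\upphi|^{- 1} \Re h(\upphi, \uppsi)$ is controlled by $\| \upphi \|_{L^p}^{2p - 1} \| \uppsi \|_{L^p}$, which is finite on $\cX$.

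Next I would establish the Palais--Smale property. The energy bound and the coercive behavior of $W_\tau$ in the super-threshold region give uniform control of $\| F_{\nabla_n} \|_{L^2}$, $\| \nabla_n \upphi_n \|_{L^2}$ and, crucially, $\| (|\upphi_n|^2 - \tau_n)_+ \|_{L^p}$ — hence $\| \upphi_n \|_{L^p}$. A Coulomb gauge fixing with respect to $\nabla^0$ (available since the moduli of normal phase solutions is compact modulo gauge) upgrades the $L^2$-bound on $F_{\nabla_n}$ to an $L_1^2$-bound on $A_n \eqdef \nabla_n - \nabla^0$, via the uniform coercivity asserted in \Cref{sec:GL_theory}. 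After extracting a subsequence, $A_n \rightharpoonup A$ in $L_1^2$, $\upphi_n \rightharpoonup \upphi$ in $\cX$, and by Rellich--Kondrachov $A_n \to A$ in $L^q$ for all $q < \frac{2N}{N-2}$ and $\upphi_n \to \upphi$ in $L^s$ for all $s < \infty$ (since $\upphi_n \in L_1^2 \cap L^p$ with $p > N$).

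The main obstacle will be upgrading weak to strong convergence in $\cX$. The standard trick is to test $D \widetilde{\cE}_{\tau_n, \kappa_n}(\nabla_n, \upphi_n) \to 0$ against the difference $(A_n - A, \upphi_n - \upphi)$; the two quadratic principal-symbol terms $\| \rd(A_n - A) \|_{L^2}^2$ and $\| \nabla^0(\upphi_n - \upphi) \|_{L^2}^2$ emerge, while the lower-order and potential terms converge to zero by compactness and dominated convergence. The strong convergence in the $L^p$ part of the $\cX$-norm for $\upphi_n$ requires exploiting the superlinear growth of $W_\tau$: from the $L^p$-equi-integrability supplied by the uniform $L^p$-bound on $(|\upphi_n|^2 - \tau_n)_+$, standard arguments (cf. \cite{PS20}*{Section~7}) give $\upphi_n \to \upphi$ strongly in $L^p$. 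Passing to the limit in the approximate Euler--Lagrange equation then yields $D \widetilde{\cE}_{\tau, \kappa}(\nabla, \upphi) = 0$, and convergence of the energies is then automatic.

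Finally, for the equivalence of critical points of $\cE_{\tau, \kappa}$ and $\widetilde{\cE}_{\tau, \kappa}$, I would argue by a maximum principle. A critical point of $\widetilde{\cE}_{\tau, \kappa}$ satisfies a Ginzburg--Landau-type equation with $(\tau - |\upphi|^2)\upphi$ replaced by a multiple of $W_\tau^\prime(|\upphi|) |\upphi|^{-1} \upphi$. The Kato-type identity $\tfrac{1}{2}\Delta|\upphi|^2 + |\nabla \upphi|^2 = - \Re h(\nabla^* \nabla \upphi, \upphi)$ combined with the Euler--Lagrange equation shows that at an interior maximum of $|\upphi|^2$ with value exceeding $\tau$, the right-hand side is non-positive while $|\nabla\upphi|^2 \geqslant 0$ and $\Delta|\upphi|^2 \geqslant 0$, forcing $|\upphi|^2 \leqslant \tau$ pointwise. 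On this sub-threshold region $W_\tau(x) = (x^2 - \tau)^2$, so the critical-point equations coincide, which closes the argument.
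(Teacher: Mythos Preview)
Your proposal is correct and follows essentially the same route as the paper, which does not give a detailed proof but instead states that the techniques of \cite{PS20}*{Section~7} apply almost verbatim with trivial modifications (see also \cite{PPS21}*{Section~5.1}), the only adjustment being that the paper uses a gauged Palais--Smale property rather than the gauge-invariant Finsler structure of \cite{PS20}. Your sketch fills in precisely those steps---the $C^1$ verification, the coercivity of $\| \upphi \|_{L^p}$ from the super-threshold growth of $W_\tau$, the Coulomb-gauge compactness argument, and the maximum-principle identification of critical points---in the manner the paper intends.
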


\begin{remark}
	In \cite{PS20}*{Section~7} the authors deal with the gauge ambiguity coming from harmonic gauge transformations by introducing a gauge invariant Finsler structure on $\cX$ and then get the classical Palais--Smale property of $\widetilde{\cE}_{\tau, \kappa}$. Alternatively, here we use a gauged version of Palais--Smale property.
\end{remark}

\smallskip

We are ready to prove our last result.

\begin{theorem}
	\label{theorem:branch_2}
	Let $\lambda$ be given by \cref{eq:def_lambda}, $\kappa^2 \geqslant \tfrac{1}{2}$, and $\tau_0 \eqdef \tfrac{\lambda}{\kappa^2}$. Then irreducible solutions to the Ginzburg--Landau \cref{eq:gl1,eq:gl2} exist exactly when $\tau > \tau_0$.

	Furthermore, there exists a Hermitian Yang--Mills connection, $\nabla^0$, on $\cL$, such that $\cH^0 (X; \cL)$ is nontrivial, and a sequence of pairs
	\begin{equation}
		\( \( \Phi_n, t_n \) \in \cH^0 (X; \cL) \times \rl_+ \)_{n \in \N},
	\end{equation}
	such that for all $n \in \N$, $\| \Phi_n \|_{L^2} = 1$ and $\lim_{n \rightarrow \infty} t_n = 0$, and there is sequence of triples
	\begin{equation}
		\( \( A_n, \upphi_n, \tau_n \) \in \Omega_{\rd^*}^1 \times \cX \times \rl_+ \)_{n \in \N},
	\end{equation}
	of the form
	\begin{equation}
		\( A_n, \upphi_n, \tau_n \) = \( t_n^2 \cA_n, t_n \Phi_n + t_n^3 \Psi_n, \tfrac{\lambda}{\kappa^2} + t_n^2 \epsilon_n \),
	\end{equation}
	such that the family
	\begin{equation}
		\left\{ \( \cA_n, \Psi_n, \epsilon_n \) \right\}_{n \in \N},
	\end{equation}
	is bounded in $L_1^2 \times \cX \times \rl_+$, and for each $n \in \N$ the pair $\( \nabla^0 + A_n, \upphi_n \)$ is an irreducible solution to the Ginzburg--Landau \cref{eq:gl1,eq:gl2} with $\tau = \tau_n > \tau_0$.
\end{theorem}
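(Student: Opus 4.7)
The proof falls into two halves: the sharp existence dichotomy---irreducible solutions exist exactly when $\tau > \tau_0$---and the construction of the bifurcating sequence as $\tau \searrow \tau_0$.

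For the ``only if'' direction, I would suppose $\( \nabla, \upphi \)$ is an irreducible solution, write $\nabla = \nabla^0 + A$, and pair \cref{eq:gl2} with $\upphi$ in $L^2$. The K\"ahler--Weitzenb\"ock identity $\nabla^* \nabla = 2 \delbar_\nabla^* \delbar_\nabla + i \Lambda F_\nabla$ yields
\begin{equation*}
	2 \| \delbar_\nabla \upphi \|_{L^2}^2 + \int\limits_X i \Lambda F_\nabla \, | \upphi |^2 \, \vol_g = \kappa^2 \tau \| \upphi \|_{L^2}^2 - \kappa^2 \| \upphi \|_{L^4}^4;
\end{equation*}
and since $\nabla^0$ is Hermitian Yang--Mills, $i \Lambda F_\nabla = \lambda + i \Lambda \rd A$. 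The plan is to use \cref{eq:gl1} together with the K\"ahler identity $\Lambda \rd = i \( \delbar^* - \del^* \)$ on $1$-forms to control $\int i \Lambda \rd A \cdot | \upphi |^2$ and establish the integrated lower bound $\int i \Lambda F_\nabla \, | \upphi |^2 \geqslant \lambda \| \upphi \|_{L^2}^2$. This is a Bradlow-type inequality whose proof uses the full coupling of \cref{eq:gl1,eq:gl2}, with $\kappa^2 \geqslant \tfrac{1}{2}$ absorbing the resulting cross terms; combined with $\| \upphi \|_{L^4}^4 > 0$, it then forces $\kappa^2 \tau \| \upphi \|_{L^2}^2 > \lambda \| \upphi \|_{L^2}^2$, i.e., $\tau > \tau_0$.

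For the ``if'' direction I would work variationally with $\widetilde{\cE}_{\tau, \kappa}$ from \cref{eq:mod_gle}, whose critical points coincide with those of $\cE_{\tau, \kappa}$ by \Cref{theorem:gPS}. For each $\tau > \tau_0$, perturbing the normal phase along a unit holomorphic section $\Phi \in \cH^0 \( X ; \cL \)$ gives the expansion
\begin{equation*}
	\widetilde{\cE}_{\tau, \kappa} \( \nabla^0, t \Phi \) = \widetilde{\cE}_{\tau, \kappa} \( \nabla^0, 0 \) + \tfrac{t^2}{2} \( \lambda - \kappa^2 \tau \) + O \( t^4 \),
\end{equation*}
so $\lambda - \kappa^2 \tau < 0$ shows the normal phase is not a minimizer; applying \Cref{theorem:gPS} to a minimizing sequence produces, after gauge, an irreducible critical point and hence a Ginzburg--Landau solution. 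To extract the bifurcating sequence, I would choose $\tau_n \searrow \tau_0$ with corresponding irreducible minimizers $\( \nabla_n, \upphi_n \)$. A finer variational estimate yields $\widetilde{\cE}_{\tau_n, \kappa} \( \nabla_n, \upphi_n \) - \widetilde{\cE}_{\tau_n, \kappa} \( \nabla^0, 0 \) = O \( \( \tau_n - \tau_0 \)^2 \)$, which via \Cref{theorem:gPS} forces $\upphi_n \to 0$ in $\cX$ and $\nabla_n$ to subconverge (after gauge) to a Hermitian Yang--Mills connection; compactness of the Hermitian Yang--Mills moduli, \Cref{theorem:HYMs}, lets us gauge the limit back to $\nabla^0$. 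Setting $t_n \eqdef \| \upphi_n \|_{L^2}$, letting $\Phi_n$ be the normalized $L^2$-projection of $t_n^{- 1} \upphi_n$ onto $\ker \( \Delta_0 - \lambda \id \)$, $\Psi_n \eqdef t_n^{- 3} \( \upphi_n - t_n \Phi_n \)$, $\cA_n \eqdef t_n^{- 2} \( \nabla_n - \nabla^0 \)$, and $\epsilon_n \eqdef t_n^{- 2} \( \tau_n - \tau_0 \)$, the same fixed-point/projection machinery as in the proof of \Cref{Mtheorem:branch} delivers the claimed boundedness.

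The principal obstacle is the sharp integrated inequality $\int i \Lambda F_\nabla \, | \upphi |^2 \geqslant \lambda \| \upphi \|_{L^2}^2$: the pointwise version would force $\nabla$ to be Hermitian Yang--Mills and is generically false, so the integrated version must be extracted from the coupled system via K\"ahler identities, with $\kappa^2 \geqslant \tfrac{1}{2}$ essential for closing the estimate on the cross terms. The feature that distinguishes this argument from \Cref{Mtheorem:branch} is the possible nontriviality of $H_\dR^1 \( X, g \)$, circumvented here by the compactness of the Hermitian Yang--Mills moduli, at the price of obtaining only a sequence rather than a continuous branch.
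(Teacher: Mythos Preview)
Your variational half---showing the normal phase is not a minimizer for $\tau > \tau_0$ via the expansion along a holomorphic section, extracting minimizers through \Cref{theorem:gPS}, sending $\tau_n \searrow \tau_0$, and using the Palais--Smale compactness again to force convergence (after gauge) to a normal phase solution $\(\nabla^0, 0\)$---matches the paper's argument essentially line for line. The decomposition $\upphi_n = t_n \Phi_n + t_n^3 \Psi_n$ and the boundedness claims are handled the same way; the paper simply declares them ``straightforward'' rather than invoking the fixed-point machinery of \Cref{Mtheorem:branch}.

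The nonexistence half is where you diverge, and there is a real gap. You stake the argument on the integrated inequality $\int i\Lambda F_\nabla \, |\upphi|^2 \geqslant \lambda \|\upphi\|_{L^2}^2$ and correctly flag it as the ``principal obstacle,'' but you give no mechanism for establishing it; pairing \cref{eq:gl1} against $|\upphi|^2$ via K\"ahler identities produces $\int f |\upphi|^2 = \|\del_\nabla \upphi\|_{L^2}^2 - \|\delbar_\nabla \upphi\|_{L^2}^2$, and there is no evident way to bound this below by $\lambda \|\upphi\|_{L^2}^2$ using only integrated quantities. The paper takes a different route entirely. It sets $w \eqdef \tfrac{1}{2}\(\tau - |\upphi|^2\)$ and $f \eqdef i\Lambda F_\nabla$, derives from \cref{eq:gl1,eq:gl2} the scalar equations $\(\Delta + 2\kappa^2|\upphi|^2\) w = |\nabla\upphi|^2$ and $\(\Delta + |\upphi|^2\) f = |\nabla^{1,0}\upphi|^2 - |\nabla^{0,1}\upphi|^2$, and then combines them to obtain
\[
\(\Delta + |\upphi|^2\)\(\kappa^2\tau - |\upphi|^2 \pm f\) = \(\kappa^2 - \tfrac{1}{2}\)|\upphi|^4 + (1 \pm 1)|\nabla^{1,0}\upphi|^2 + (1 \mp 1)|\nabla^{0,1}\upphi|^2 \geqslant 0,
\]
where $\kappa^2 \geqslant \tfrac{1}{2}$ is exactly what makes the right-hand side nonnegative. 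The \emph{maximum principle} then gives the pointwise bound $|f| < \kappa^2\tau - |\upphi|^2 < \kappa^2\tau$. Finally one integrates $f$ itself (not $f|\upphi|^2$): the homological invariance of the degree forces $\tfrac{1}{\Vol(X,g)}\int f \,\vol_g = \lambda$, so $\lambda < \kappa^2\tau$. You dismissed a pointwise approach on the grounds that $i\Lambda F_\nabla \geqslant \lambda$ pointwise would be too strong; the paper's insight is that the \emph{correct} pointwise statement is the two-sided bound $|f| < \kappa^2\tau$, which is both provable by maximum principle and sufficient after integration.
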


\begin{proof}
	First we show that if $\tau \leqslant \tau_0$, then all critical points of $\cE_{\tau, \kappa}$ are normal phase solutions. We use proof by contradiction: let $\tau \leqslant \tau_0$, $\( \nabla, \upphi \)$ be a critical point of $\cE_{\tau, \kappa}$ such that $\upphi \neq 0$, and
	\begin{align}
		w	&\eqdef \tfrac{1}{2} \( \tau - |\upphi|^2 \), \\
		f	&\eqdef i \Lambda F_\nabla.
	\end{align}
	Using \cref{eq:gl2}, we get
	\begin{equation}
		\( \Delta + 2 \kappa^2 |\upphi|^2 \) w = - \tfrac{1}{2} \Delta |\upphi|^2 + 2 \kappa^2 |\upphi|^2 w = - \Re \( h \( \upphi, \nabla^* \nabla \upphi \) \) + |\nabla \upphi|^2 + 2 \kappa^2 |\upphi|^2 w = |\nabla \upphi|^2. \label{eq:delta_w}
	\end{equation}
	Maximum principle then yields $w > 0$, or, equivalently $|\upphi|^2 < \tau$ everywhere on $X$. Using \cref{eq:gl1}, the Bianchi identity, $\rd F_\nabla = 0$, and the K\"ahler identities, we get
	\begin{align}
		\Delta f	&= \rd^* \rd i \Lambda F_\nabla \\
					&= \rd^* [\rd, i \Lambda] F_\nabla \\
					&= \( \delbar^* + \del^* \) \( \delbar^* - \del^* \) F_\nabla \\
					&= \( \del^* - \delbar^* \) \( \delbar^* + \del^* \) F_\nabla \\
					&= \( \del^* - \delbar^* \) \( i \: \Im \( h \( \upphi, \nabla \upphi \) \) \) \\
					&= 2 \Re \( \del^* \( i \: \Im \( h \( \nabla \upphi, \upphi \) \) \)^{1, 0} \) \\
					&= \Re \( \del^* h \( \nabla^{0, 1} \upphi, \upphi \) - \del^* h \( \upphi, \nabla^{1, 0} \upphi \) \) \\
					&= \Re \( - h \( i \Lambda \( \nabla^{1, 0} \nabla^{0, 1} \upphi \), \upphi \) - |\nabla^{0, 1} \upphi|^2 - h \( \upphi, i \Lambda \( \nabla^{0, 1} \nabla^{1, 0} \upphi \) \upphi \) + |\nabla^{1, 0} \upphi|^2 \) \\
					&= - |\upphi|^2 f + |\nabla^{1, 0} \upphi|^2 - |\nabla^{0, 1} \upphi|^2.
	\end{align}
	Thus we proved the equation
	\begin{equation}
		\( \Delta + |\upphi|^2 \) f = |\nabla^{1, 0} \upphi|^2 - |\nabla^{0, 1} \upphi|^2. \label{eq:delta_f}
	\end{equation}
	Now, using \cref{eq:delta_w,eq:delta_f}, we get that
	\begin{equation}
		\( \Delta + |\upphi|^2 \) \( \kappa^2 \tau - |\upphi|^2 \pm f \) = \( \kappa^2 - \tfrac{1}{2} \) |\upphi|^4 + (1 \pm 1) |\nabla^{1, 0} \upphi|^2 + (1 \mp 1) |\nabla^{0, 1} \upphi|^2 \geqslant 0,
	\end{equation}
	thus, using the maximum principle again, we get that
	\begin{equation}
		|i \Lambda F_\nabla| = |f| < \kappa^2 \tau - |\upphi|^2 < \kappa^2 \tau.
	\end{equation}
	Using the homological invariance of the degree, we get that if there is an irreducible solution to the Ginzburg--Landau \cref{eq:gl1,eq:gl2} on $\( \cL, h \)$, then
	\begin{equation}
		\lambda	= \frac{1}{\Vol \( X, g \)} \int\limits_X \( i \Lambda F_\nabla \) \vol_g < \frac{1}{\Vol \( X, g \)} \kappa^2 \tau \Vol \( X, g \) = \kappa^2 \tau \leqslant \kappa^2 \tau_0 = \lambda,
	\end{equation}
	which is a contradiction. Thus we proved that if $\tau \leqslant \tau_0$, then all critical points of $\cE_{\tau, \kappa}$ are normal phase solutions.

	Now, on the one hand, using \cref{theorem:gPS}, we get that for all $\tau > 0$, there are absolute minimizer for $\cE_{\kappa, \tau}$. Let $\epsilon > 0$ and $\tau \eqdef \tau_0 + \epsilon$. On the other hand, consider $\Phi \in \ker \( \Delta_0 - \lambda \id \)$ with unit $L^2$-norm. There exists $C > 0$, independent of the choice of $\Phi$, such that $\| \Phi \|_{L^4} \leqslant C$. Thus if $s \in \( 0, \tfrac{\sqrt{2 \epsilon}}{C^2} \)$, then
	\begin{align}
		\cE_{\kappa, \tau} \( \nabla^0, s \: \Phi \) - \cE_{\kappa, \tau} \( \nabla^0, 0 \)	&= s^2 \| \nabla^0 \Phi \|_{L^2}^2 - \( \kappa^2 \tau_0 + \kappa^2 \epsilon \) \| \Phi \|_{L^2}^2 + s^4 \tfrac{\kappa^2}{2} \| \Phi \|_{L^4}^4 \\
			&\leqslant s^2 \( \lambda - \kappa^2 \tau_0 - \kappa^2 \epsilon \) + \tfrac{C^4 \kappa^2}{2} s^4 \\
			&\leqslant s^2 \kappa^2 \( \tfrac{C^4}{2} s^2 - \epsilon \) \\
			&< 0.
	\end{align}
	Since the energy of all normal phase solutions are the same, the absolute minimum is not achieved at a normal phase solution, and hence $\upphi_\epsilon \neq 0$. Let $\( \epsilon_n \)_{n \in \N}$ be a positive sequence that converges to zero, $\tau_n \eqdef \tau_0 + \epsilon_n$, and let $\( \nabla_n, \upphi_n \)$ be the corresponding minimizer. Using again \Cref{theorem:gPS}, we get that (after picking a subsequence and changing gauge) $\( \nabla_n, \upphi_n \)$ converges to a critical point of $\cE_{\kappa, \tau_0}$ and since $\tau_n \rightarrow \tau_0$, that this critical point has the form $\( \nabla^0, 0 \)$. Let us write $\nabla_n \eqdef \nabla^0 + t_n^2 \cA_n$ and $\upphi_n \eqdef t_n \Phi_n + \Psi_n$, where $\Phi_n \in \ker \( \( \nabla^0 \)^* \nabla^0 - \lambda \id \)$ has unit $L^2$-norm and $\Psi_n \perp_{L^2} \ker \( \( \nabla^0 \)^* \nabla^0 - \lambda \id \)$. It is straightforward to prove that the sequence of $5$-tuples, $\left\{ \( \cA_n, \Phi_n, \Psi_n, \epsilon_n, t_n \) \right\}_{n \in \N}$, satisfies the claims of the theorem. This completes the proof.
\end{proof}

	\bibliography{references}

\end{document}